\def\R{\mathbb{R}}
\newcommand{\be}{\begin{equation}}
\newcommand{\ee}{\end{equation}}
\theoremstyle{plain}
\newtheorem{theorem}{Theorem}[section]
\newtheorem{definition}[theorem]{Definition}
\newtheorem{proposition}[theorem]{Proposition}
\newtheorem{lemma}[theorem]{Lemma}
\newtheorem{remark}[theorem]{Remark}
\numberwithin{theorem}{section}
\numberwithin{equation}{section}
\def\R{\mathbb{R}}
\author{Emmanuel Wend-Benedo Zongo \footnote{Université Paris-Saclay,~~email:~emmanuel.zongo@universite-paris-saclay.fr}}
\date{}
\begin{document}
\title{Bifurcation results for quasi-linear operators from the Fu\v cik spectrum of the Laplacian}
\maketitle
\vspace*{1.5cm}
 \begin{abstract}
In this paper, we analyze an eigenvalue problem for a quasi-linear elliptic operators involving Dirichlet boundary condition in an open smooth bounded set of $\R^N$. We investigate a bifurcation results (from trivial solution and from infinity) of an eigenvalue problem involving the $(p,2)$-Laplace operator, from the Fu\v cik spectrum of the Laplacian.\\
\\
Keywords: quasi-linear operators, Fu\v cik spectrum, bifurcation from trivial solution, bifurcation from infinity, half-eigenvalues.\\
2010 Mathematics Subject Classification: 35J20, 35J92, 35J25. 
\end{abstract}
\tableofcontents
\section{Introduction}
 Assume $\Omega\subset\mathbb{R}^N$ ($N\geq 2$) is an open bounded domain with smooth boundary $\partial\Omega.$ A classical result in the theory of eigenvalue problems guarantees that the problem
\begin{equation}\label{dirichletlap}
\left\{
\begin{array}{l}
-\Delta u=\displaystyle \lambda  u~~\text{in $\Omega$},\\
u = \displaystyle 0~~~~~~~~\text{on $\partial\Omega$}
\end{array}
\right.
\end{equation}
possesses a nondecreasing sequence of eigenvalues $0<\lambda_1<\lambda_2\leq\dots$ and a sequence of corresponding eigenfunctions which define a Hilbert basis in $L^2(\Omega)$, (see \cite{Hen}). Moreover, it is known that the first eigenvalue of problem (\ref{dirichletlap}) is characterized in the variational point of view by,
$$
\lambda_1:=\inf_{u\in W^{1,2}_0(\Omega)\backslash\{0\}}\left\{
\frac{\int_{\Omega}|\nabla u|^2~dx}{\int_{\Omega}u^2~dx}\right\}
.$$ 
In \cite{ZB}, the authors investigated the asymptotic behavior of the spectrum and the existence of multiple solutions of the following nonlinear eigenvalue problem
\begin{equation}\label{equa2}
  \begin{cases}
    -\Delta_pu-\Delta u=\lambda u ~~~\text{on}~~\Omega,\\
    u=0~~~~~\text{on}~~\partial\Omega,
  \end{cases}
\end{equation}
where $-\Delta_pu=-\text{div}(|\nabla u|^{p-2}\nabla u)$ denotes the $p$-Laplace operator.  In \cite{ZB} it was shown that for $p>2$ there exist eigenvalue branches emanating from $(\lambda_k,0),$ and for $1<p<2$ emanating from $(\lambda_k,\infty)$. A nonlinear generalization of the spectrum of problem (\ref{dirichletlap}) is the Fu\v cik spectrum, given by the values $(\lambda_+,\lambda_-)\in\R^+\times\R^+$ for which the problem $-\Delta u=\lambda_+u^+-\lambda_-u^-$ with Dirichlet boundary condition has nontrivial solution. The values $\lambda_+$ and $\lambda_-$ which are such that $-\Delta u=\lambda_+u^+-\lambda_-u^-$ has nontrivial solution $u\in W^{1,2}_0(\Omega)$ will be called half-eigenvalues, while the corresponding solutions $u$ will be called half-eigenfunctions.
The Fu\v cik spectrum was introduced by S. Fu\v cik  \cite{SF} and N. Dancer \cite{ND} in the 70's, mainly motivated by the works of A. Ambrosetti and G. Prodi \cite{AG} in connection with what is known in the literature as ``asymmetric problems".\\
\indent In this paper we consider the following nonlinear eigenvalue problem
\begin{equation}\label{equa3}
  \begin{cases}
    -\Delta_pu-\Delta u=\lambda_+u^+-\lambda_-u^- ~~~\text{on}~~\Omega,\\
    u=0~~~~~\text{on}~~\partial\Omega.
  \end{cases}
\end{equation}
The operator $-\Delta_p-\Delta$ appears in quantum field theory  (see, \cite{Ben}), where it arises in the mathematical description of propagation phenomena of solitary waves. The main purpose of this paper is to study the asymptotic behavior of the spectrum of problem (\ref{equa3}) for all $p>2$ and $1<p<2.$ We intend to study problem (\ref{equa3}) as a bifurcation in the Fu\v cik eigenvalues from trivial solution if $p>2$ and a bifurcation from infinity if $1<p<2$.\\
\indent In \cite{RB}, the author has proved that the equation
\begin{equation}\label{ruf}
    Au-\gamma u^-=\lambda u+N(\lambda,u)
\end{equation}
has global bifurcation branches emanating from the points $(\lambda^1_k, 0)$ and $(\lambda^2_k, 0)$ in $\R\times L^2(\Omega)$ for any $k\geq 1.$ In equation (\ref{ruf}), the operator $A: D(A)\subset L^2(\Omega)\rightarrow L^2(\Omega)$ is an unbounded self-adjoint operator with compact resolvent and with eigenvalues $\lambda_1\leq \lambda_2\leq\dots,$ repeated according to their multiplicity, and $\gamma\in \R^+$ satisfies $0<\gamma<\min\{\lambda_k-\lambda_{k-1}, \lambda_{k+1}-\lambda_k\}$ ($k\geq 2$) and $N(\lambda,u)=o(\|u\|)$ for $u$ near zero uniformly on bounded $\lambda$ intervals.  The points $\lambda^1_k$ and $\lambda^2_k$ are called split eigenvalues of the operator $A-\gamma(\cdot)^-$ and their existence was proved by B. Ruf in \cite{Ruf}. Our work is inspired by the work of \cite{RB}.  In the present article, the point $(\lambda^1_k, 0)$ will be a bifurcation point for positive solutions and the point $(\lambda^2_k, 0)$ will be a bifurcation point for negative solutions of problem (\ref{equa3}). 
\\
\indent A similar structure as (\ref{ruf}) was proved in \cite{ES} concerning bifurcation problems for Pucci's operators. More precisely in \cite{ES}, the authors studied the following problem as a bifurcation problem from the trivial solution
\begin{equation}\label{pucci}
  \begin{cases}
    -\mathcal{M}^+_{\lambda,\Lambda}(D^2)u=\mu u+f(u) ~~~\text{on}~~\Omega,\\
    u=0~~~~~\text{on}~~\partial\Omega,
  \end{cases}
\end{equation}
(respectively $\mathcal{M}^-_{\lambda,\Lambda}$) where $\Omega$ is a bounded regular domain, and $\mathcal{M}^{\pm}_{\lambda,\Lambda}$ are the extremal Pucci's operators with parameters $0<\lambda\leq\Lambda.$ It is shown that bifurcation occurs form the ``half-eigenvalues" $\mu_k^+$ and $\mu_k^-$.\\ 
\\
Our main results are: let $\gamma\in\R^+$ that satisfies $0<\gamma<\min\{\lambda_k-\lambda_{k-1},\lambda_{k+1}-\lambda_k\},$ $k\geq 2$
\begin{itemize}
    \item For $p>2,$ we obtain bifurcation branches emanating from the points $(\lambda^1_k,0)$ and $(\lambda^2_k,0)$ in $\R\times L^2(\Omega)$ for the operator $-\Delta_p-\Delta-\gamma(\cdot)^-,$ where $\lambda^1_k$ and $\lambda^2_k$ are the split eigenvalues of $-\Delta-\gamma(\cdot)^-$,
    \item For $1<p<2,$ we obtain bifurcation branches emanating from the points $(\lambda^1_k,\infty)$ and $(\lambda^2_k,\infty)$ in $\R\times L^2(\Omega)$ for the operator $-\Delta_p-\Delta-\gamma(\cdot)^-$.
\end{itemize}
\indent This article is organized as follows. In section \ref{S1}, we present the Fu\v cik spectrum of problem (\ref{equa3}) and in section \ref{S2} we show that are solutions branches of problem (\ref{equa3}) that bifurcate from the Fu\v cik eigenvalues of problem (\ref{nonlinear4}). In section \ref{Sinf}, we discuss bifurcation from infinity of problem (\ref{equa3}).
For a further discussion on this topic, we refer the interested reader to the work of \cite{ES} and the references therein.\\
\\
We recall that if $1<p<q,$ then $L^q(\Omega)\subset L^p(\Omega)$ and as a consequence, one has $W^{1,q}_0(\Omega)\subset W^{1,p}_0(\Omega).$ In what follows, we denote by $\|.\|_{1,p}$ and $\|.\|_2$ the norms on $W^{1,p}_0(\Omega)$ and $L^2(\Omega)$  defined respectively by $$\|u\|_{1,p}=\left(\int_{\Omega}|\nabla u|^p~dx\right)^{\frac{1}{p}}~~\text{and}~~\|u\|_2=\left(\int_{\Omega}|u|^2~dx\right)^{\frac{1}{2}}.$$
\section{Fu\v cik spectrum and nonlinear spectrum}\label{S1}
An eigenvalue $\lambda\in\mathbb{R}$ of the quasi-linear elliptic operator $-\Delta_p-\Delta$ on $\Omega$ is such that the following problem 
\begin{equation}\label{nonlinear1}
  \begin{cases}
    -\Delta_pu-\Delta u=\lambda u ~~~\text{on}~~\Omega,\\
    u=0~~~~~\text{on}~~\partial\Omega
  \end{cases}
\end{equation}
has a nontrivial solution for  $p\in(1,\infty)\backslash\{2\}$. We denote by $\Sigma$ the spectrum of problem (\ref{nonlinear1}).
The Fu\v cik spectrum of  $-\Delta$  on $\Omega$ with Dirichlet boundary conditions is defined as the set $\mathcal{S}_{(\lambda_+,\lambda_-)}$ of pairs $(\lambda_+,\lambda_-)\in\mathbb{R}^2$ such that the problem 
\begin{equation}\label{nonlinear4}
  \begin{cases}
    -\Delta u=\lambda_+ u^+-\lambda_-  u^- ~~~\text{on}~~\Omega,\\
    u=0~~~~~\text{on}~~\partial\Omega
  \end{cases}
\end{equation}
has a nontrivial solution. Then,
the Fu\v cik spectrum of  $-\Delta_p-\Delta$  on $\Omega$ with Dirichlet boundary conditions is defined as the set $\Sigma_{(\lambda_+,\lambda_-)}$ of pairs $(\lambda_+,\lambda_-)\in\mathbb{R}^2$ such that the problem 
\begin{equation}\label{nonlinear2}
  \begin{cases}
    -\Delta_pu-\Delta u=\lambda_+u^+-\lambda_-u^- ~~~\text{on}~~\Omega,\\
    u=0~~~~~\text{on}~~\partial\Omega
  \end{cases}
\end{equation}
has a nontrivial solution for $p\in(1,\infty)\backslash\{2\}.$ Clearly $\Sigma_{(\lambda_+,\lambda_-)}$ generalizes the notion of the spectrum of problem (\ref{nonlinear1}), since $\Sigma$ consists of all values $\lambda\in\R$ such that problem (\ref{nonlinear2}) for $\lambda_+=\lambda_-=\lambda$ has a nontrivial solution.
Here we write $u^{\pm}=\max\{\pm u,0\}$ in $\Omega,$ where $u^+$ and $u^-$ denote respectively the positive and the negative part of $u.$ We also recall that $u=u^+-u^-.$\\

\subsection{Spectrum of problem (\ref{nonlinear2})}
\begin{definition}
We say that $u\in W^{1,p}_0(\Omega)$ $($if $p>2$ $)$ or $u\in W^{1,2}_0(\Omega)$ $($if $1<p<2$ $)$ with $u\not\equiv 0, \lambda_{\pm}\in\R$, is a weak solution of (\ref{nonlinear2}) if and only if 
\begin{equation}\label{weakform}
    \int_{\Omega}|\nabla u|^{p-2}\nabla u\cdot \nabla\varphi~dx+\int_{\Omega}\nabla u\cdot \nabla\varphi~dx=\lambda_+\int_{\Omega}u^+\varphi~dx-\lambda_-\int_{\Omega}u^-\varphi~dx
\end{equation}
for all $\varphi\in W^{1,p}_0(\Omega)$ $($if $p>2$ $)$, $\varphi\in W^{1,2}_0(\Omega)$ $($if $1<p<2$ $).$ 
\end{definition}
The function $u$ is called an half-eigenfunction associated with the pair $(\lambda_+,\lambda_-)\in\mathbb{R}^2.$
\begin{remark}
For $\lambda_+\leq \lambda_1$ and $\lambda_-\leq \lambda_1$, equation (\ref{nonlinear2}) has only zero solution.
\end{remark}
\subsection{Invertibility property}
\begin{lemma}\label{invertiblity}
Let $p>2.$ Then operator $-\Delta_p-\Delta-\gamma(\cdot)^-$ is a global homeomorphism between $W^{1,p}_0(\Omega)$ and its dual space for a fixed $\gamma\geq 0.$
\end{lemma}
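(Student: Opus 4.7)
The plan is to realize $-\Delta_p-\Delta-\gamma(\cdot)^-$ as a bounded nonlinear operator $T\colon W^{1,p}_0(\Omega)\to W^{-1,p'}(\Omega)$ via
$$\langle Tu,v\rangle=\int_\Omega|\nabla u|^{p-2}\nabla u\cdot\nabla v\,dx+\int_\Omega\nabla u\cdot\nabla v\,dx-\gamma\int_\Omega u^-v\,dx,$$
and to apply a Browder--Minty surjectivity theorem for strictly monotone, coercive, continuous operators on reflexive Banach spaces, combined with the $(S_+)$ property to obtain continuity of the inverse. Since $p>2$ and $\Omega$ is bounded, the continuous embedding $W^{1,p}_0(\Omega)\hookrightarrow W^{1,2}_0(\Omega)$ ensures that the Laplacian term yields a bounded linear map into $W^{-1,p'}(\Omega)$, the $p$-Laplace term is classically bounded and continuous between these spaces, and $u\mapsto u^-$ is $1$-Lipschitz on $L^p(\Omega)$, which embeds continuously into $W^{-1,p'}(\Omega)$.

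For strict monotonicity I would compute, for $u\neq v$,
$$\langle Tu-Tv,u-v\rangle=\int_\Omega\bigl(|\nabla u|^{p-2}\nabla u-|\nabla v|^{p-2}\nabla v\bigr)\cdot\nabla(u-v)\,dx+\|\nabla(u-v)\|_2^2-\gamma\int_\Omega(u^--v^-)(u-v)\,dx.$$
The $p$-Laplace contribution is strictly positive by the classical inequality $(|a|^{p-2}a-|b|^{p-2}b)\cdot(a-b)>0$ for $a\neq b$; the Laplacian contribution is non-negative; and the key observation for the perturbation is that $t\mapsto-t^-$ is non-decreasing on $\R$, so $(u^--v^-)(u-v)\leq 0$ pointwise and the $-\gamma$ summand is non-negative for $\gamma\geq 0$. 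Coercivity is immediate from the pointwise identity $u^-\!\cdot u=-(u^-)^2$, which gives
$$\langle Tu,u\rangle=\|u\|_{1,p}^p+\|\nabla u\|_2^2+\gamma\|u^-\|_2^2\geq\|u\|_{1,p}^p,$$
and this blows up faster than $\|u\|_{1,p}$ for $p>1$. The Browder--Minty theorem then delivers bijectivity of $T$.

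For the last requirement, continuity of $T^{-1}$, I would use the $(S_+)$ property: if $u_n\rightharpoonup u$ in $W^{1,p}_0(\Omega)$ and $\limsup_n\langle Tu_n,u_n-u\rangle\leq 0$, then $u_n\to u$ strongly. Splitting $T$ into its three contributions, the Laplacian part satisfies $\liminf\int\nabla u_n\cdot\nabla(u_n-u)\,dx\geq 0$ by weak lower semicontinuity of $\|\nabla\,\cdot\,\|_2$ together with weak convergence $u_n\rightharpoonup u$ in $W^{1,2}_0(\Omega)$ (inherited from $W^{1,p}_0$ since $p>2$); the perturbation part tends to $0$ because the compact embedding $W^{1,p}_0(\Omega)\hookrightarrow L^2(\Omega)$ gives $u_n^-\to u^-$ strongly in $L^2$ while $u_n-u\to 0$ in $L^2$. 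Therefore $\limsup\langle-\Delta_pu_n,u_n-u\rangle\leq 0$, and the well-known $(S_+)$ property of $-\Delta_p$ on $W^{1,p}_0(\Omega)$ yields strong convergence $u_n\to u$. The delicate point I anticipate is precisely this verification that the compact lower-order perturbation does not spoil the $(S_+)$ structure furnished by $-\Delta_p$; the rest of the argument is a direct application of standard monotone operator theory.
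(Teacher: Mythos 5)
Your proof is correct, but it takes a genuinely different route through monotone operator theory than the paper does. The paper's argument rests on the two elementary vector inequalities of Lindqvist quoted as Lemma 2.5: inequality (a) gives the quantitative strong monotonicity estimate $\langle \mathcal{M}u-\mathcal{M}v,u-v\rangle\geq c_1\|u-v\|_{1,p}^p$ (the sign of the $-\gamma$ term being handled exactly as you handle it), inequality (b) together with H\"older's inequality and Sobolev embedding gives continuity of $\mathcal{M}$ from $W^{1,p}_0(\Omega)$ to $W^{-1,p'}(\Omega)$, and a single citation (a homeomorphism theorem for continuous, strongly monotone operators, \cite[Corollary 2.5.10]{KC}) then yields the global homeomorphism in one step; note that the strong monotonicity estimate already carries an explicit H\"older-type modulus of continuity for the inverse, namely $\|u-v\|_{1,p}\leq\bigl(c_1^{-1}\|\mathcal{M}u-\mathcal{M}v\|_{W^{-1,p'}}\bigr)^{1/(p-1)}$. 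You instead use only strict monotonicity plus coercivity (your coercivity identity $-\gamma\int_\Omega u^-u\,dx=\gamma\|u^-\|_2^2$ is correct) and invoke Browder--Minty for bijectivity, then recover continuity of the inverse through the $(S_+)$ property of $-\Delta_p$, verifying that the linear Laplacian term and the compact lower-order term do not spoil it; that verification is sound. Both arguments work: yours is softer, avoiding the quantitative vector inequalities and leaning on classical continuity and $(S_+)$ facts for the $p$-Laplacian, at the price of obtaining only qualitative continuity of the inverse and of leaving implicit the standard bridge from the $(S_+)$ property to continuity of $T^{-1}$ (bounded preimages from coercivity, extraction of a weakly convergent subsequence, identification of the limit by injectivity and continuity of $T$); the paper's argument is shorter, quantitative, and self-contained modulo the cited homeomorphism theorem.
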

For the proof of this lemma, we need the following lemma (see \cite{PL} for a proof).
\begin{lemma}\label{plindvist}
Let $p>2$. Then there exist two positive constants $c_1, c_2$ such that, for all $x_1,x_2\in\mathbb{R}^n,$ we have :
\begin{enumerate}
\item[(a)] $(x_2-x_1)\cdot (|x_2|^{p-2}x_2-|x_1|^{p-2}x_1)\geq c_1|x_2-x_1|^p$
\item[(b)]$\left||x_2|^{p-2}x_2-|x_1|^{p-2}x_1\right|\leq c_2(|x_2|+|x_1|)^{p-2}|x_2-x_1|$
\end{enumerate}
\end{lemma}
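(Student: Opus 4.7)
\medskip
\noindent\emph{Proof proposal for Lemma \ref{plindvist}.}

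The plan is to treat both inequalities via the map $F\colon \R^n\to\R^n$, $F(y)=|y|^{p-2}y$, which is $C^1$ on $\R^n\setminus\{0\}$ with Jacobian $DF(y)=|y|^{p-2}I+(p-2)|y|^{p-4}\,y\otimes y$, and to write $F(x_2)-F(x_1)=\int_0^1 DF(z(t))\,(x_2-x_1)\,dt$ along the segment $z(t)=(1-t)x_1+t x_2$. Points where the segment hits the origin form a measure-zero set and cause no integrability issue since $|DF(y)|\lesssim |y|^{p-2}$ is locally integrable for $p>2$, so the identity is valid.

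For part (b), I would estimate the operator norm $|DF(y)|\le |y|^{p-2}+(p-2)|y|^{p-2}=(p-1)|y|^{p-2}$ and apply the triangle inequality under the integral together with $|z(t)|\le (1-t)|x_1|+t|x_2|\le |x_1|+|x_2|$, which yields
\[
\bigl||x_2|^{p-2}x_2-|x_1|^{p-2}x_1\bigr|\le (p-1)(|x_1|+|x_2|)^{p-2}|x_2-x_1|,
\]
giving $c_2=p-1$. This is essentially a routine mean-value argument and should present no obstacle.

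For part (a), I would first test the quadratic form of $DF(y)$ against $v=x_2-x_1$:
\[
v\cdot DF(y)v=|y|^{p-2}|v|^2+(p-2)|y|^{p-4}(y\cdot v)^2\ge |y|^{p-2}|v|^2
\]
since $p\ge 2$. Integrating in $t$ gives the intermediate estimate
\[
(x_2-x_1)\cdot\bigl(|x_2|^{p-2}x_2-|x_1|^{p-2}x_1\bigr)\ge |x_2-x_1|^2\int_0^1 |z(t)|^{p-2}\,dt.
\]
The remaining, and main, obstacle is a lower bound on the integral in terms of $|x_2-x_1|^{p-2}$. I would obtain it by a short case analysis: since $|x_1|+|x_2|\ge |x_2-x_1|$, at least one of the endpoints, say $|x_j|$, is $\ge |x_2-x_1|/2$; then by the Lipschitz estimate $\bigl||z(t)|-|x_j|\bigr|\le |t-t_j|\,|x_2-x_1|$ with $t_j\in\{0,1\}$, the quantity $|z(t)|$ stays above $|x_2-x_1|/4$ on an interval of length $1/4$ adjacent to $t_j$. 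This produces $\int_0^1|z(t)|^{p-2}dt\ge 4^{-(p-1)}|x_2-x_1|^{p-2}$ and hence (a) with $c_1=4^{-(p-1)}$.

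Together, parts (a) and (b) are now proved with explicit constants depending only on $p>2$, matching the statement of Lemma \ref{plindvist}. The only non-trivial ingredient is the lower-bound case analysis in part (a); part (b) and the sign of the quadratic form in part (a) are direct consequences of the explicit formula for $DF$ and the hypothesis $p\ge 2$.
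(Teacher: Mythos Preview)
Your argument is correct. The paper itself does not give a proof of Lemma~\ref{plindvist}; it merely states the inequalities and refers the reader to Lindqvist's notes \cite{PL}. So there is no in-paper proof to compare against, and your write-up actually supplies what the paper omits.

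Your method---the integral representation $F(x_2)-F(x_1)=\int_0^1 DF(z(t))(x_2-x_1)\,dt$ with $F(y)=|y|^{p-2}y$ and the explicit Jacobian $DF(y)=|y|^{p-2}I+(p-2)|y|^{p-4}y\otimes y$---is the standard route and is essentially what one finds in \cite{PL}. Part~(b) and the quadratic-form lower bound in part~(a) are immediate from this formula; your case analysis for the lower bound on $\int_0^1|z(t)|^{p-2}\,dt$ is clean and yields the explicit constant $c_1=4^{1-p}$, with $c_2=p-1$ in~(b). One small simplification: for $p>2$ the map $F$ is in fact $C^1$ on all of $\R^n$ with $DF(0)=0$ (since $|DF(y)|\le (p-1)|y|^{p-2}\to 0$), so the caveat about the segment meeting the origin can be dropped entirely.
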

\begin{proof}[\textup{Proof of Lemma \ref{invertiblity}}]
Define the quasi-linear operator $\mathcal{M}:W^{1,p}_0(\Omega)\rightarrow W^{-1,p'}(\Omega)$ (with $1/p+1/p'=1$) by\\ $\langle \mathcal{M}u,v\rangle=\displaystyle{\int_{\Omega}}\nabla u\cdot\nabla v~dx+\displaystyle{\int_{\Omega}}|\nabla u|^{p-2}\nabla u\cdot \nabla v~dx-\gamma \int_{\Omega}u^-vdx$ for all $u,v\in W^{1,p}_0(\Omega).$\\
To show that $-\Delta_p-\Delta-\gamma(\cdot)^-$ is a homeomorphism, it is enough to show that $\mathcal{M}$ is a continuous strongly monotone operator, (see~\cite[Corollary 2.5.10]{KC}).\\
\noindent For $p>2,$ for all $u, v\in W^{1,p}_0(\Omega)$, by $(a)$, we get 
\begin{align*}
    \langle \mathcal{M}u-\mathcal{M}v,u-v\rangle &= \int_{\Omega}|\nabla(u-v)|^2dx+\int_{\Omega}\left(|\nabla u|^{p-2}\nabla u-|\nabla v|^{p-2}\nabla v\right)\cdot\nabla(u-v)~dx\\
    &-\gamma\int_{\Omega}(u^--v^-)(u-v)dx\\
&\geq \int_{\Omega}|\nabla(u-v)|^2dx+c_1\int_{\Omega}|\nabla(u-v)|^pdx-\gamma\int_{\Omega}(u^--v^-)(u-v)dx\\
&\geq  \int_{\Omega}|\nabla(u-v)|^2dx+c_1\int_{\Omega}|\nabla(u-v)|^pdx\\
&\geq  c_1\|u-v\|^p_{1,p}.
\end{align*}
Thus $\mathcal{M}$ is a strongly monotone operator. We claim that $\mathcal{M}$ is a continuous operator from $W^{1,p}_0(\Omega)$ to $W^{-1,p'}(\Omega).$  Indeed, assume that $u_n\rightarrow u$ in $W^{1,p}_0(\Omega).$ We have to show that $\|\mathcal{M}u_n-\mathcal{M}u\|_{W^{-1,p'}(\Omega)}\rightarrow 0$ as $n\rightarrow\infty.$ Using $(b)$, H\"older's inequality and the Sobolev embedding theorem, one has
\begin{eqnarray*}
\left|\langle \mathcal{M}u_n-\mathcal{M}u,v\rangle\right| &\leq & \int_{\Omega}\left||\nabla u_n|^{p-2}\nabla u_n-|\nabla u|^{p-2}\nabla u\right||\nabla v|dx+\int_{\Omega}|\nabla(u_n-u)||\nabla v|dx\\ &+&\int_{\Omega}\gamma|u_n^--u^-||v|dx\\
&\leq& c_2 \int_{\Omega}\left(|\nabla u_n|+|\nabla u|\right)^{p-2}|\nabla(u_n-u)||\nabla v|dx+\int_{\Omega}|\nabla(u_n-u)||\nabla v|dx\\ &+&\int_{\Omega}\gamma|u_n-u||v|dx\\
&\leq& c_2\left(\int_{\Omega}\left(|\nabla u_n|+|\nabla u|\right)^{p}dx\right)^{(p-2)/p}\left(\int_{\Omega}|\nabla(u_n-u)|^p dx\right)^{1/p}\left(\int_{\Omega}|\nabla v|^p dx\right)^{1/p}\\ &+& c_3\|u_n-u\|_{1,2}\| v\|_{1,2}\\
&\leq & c_4(\|u_n\|_{1,p}+\|u\|_{1,p})^{p-2}\|u_n-u\|_{1,p}\|v\|_{1,p}+c_5\|u_n-u\|_{1,p}\| v\|_{1,p}.
\end{eqnarray*}
Thus $\|\mathcal{M}u_n-\mathcal{M}u\|_{W^{-1,p'}(\Omega)}\rightarrow 0$, as $n\rightarrow +\infty,$ and hence $\mathcal{M}$ is a homeomorphism.
\end{proof}
\section{Bifurcation results}\label{S2}
Here we show bifurcation results from trivial solutions and from infinity of problem (\ref{nonlinear1}) for $p>2$ and $1<p<2$ respectively.
\subsection{Preliminary results toward the bifurcation results}
Let $p>2$, then for $u\in W^{1,p}_0(\Omega),$ we have $$-\Delta_pu-\Delta u=\lambda_+u^+-\lambda_-u^-\Leftrightarrow -\Delta_pu-\Delta u-(\lambda_+-\lambda_-)u^-=\lambda_+u.$$ We fix $\gamma=\lambda_+-\lambda_->0$ and we consider $\lambda:=\lambda_+$ as a parameter of the equation
\begin{equation}\label{bifurcation1}
  \begin{cases}
    -\Delta_pu-\Delta u-\gamma u^-=\lambda u ~~~\text{on}~~\Omega,\\
    u=0~~~~~~~~~~~\text{on}~~\partial\Omega.
  \end{cases}
\end{equation}
Using Lemma \ref{invertiblity}, we write equation (\ref{bifurcation1}) in the following form:
\begin{equation}\label{bifurcation2}
    u=\lambda(-\Delta)^{-1}u+\left\{(-\Delta_p-\Delta-\gamma(\cdot)^-)^{-1}-(-\Delta)^{-1}\right\}(\lambda u),
\end{equation}
where we consider
$$(-\Delta_p-\Delta-\gamma(\cdot)^-)^{-1}: L^2(\Omega)\subset W^{-1,p'}(\Omega)\rightarrow W^{1,p}_0(\Omega)\subset\subset L^2(\Omega),$$
and 
$$(-\Delta)^{-1}: L^2(\Omega)\subset W^{-1,2}(\Omega)\rightarrow W^{1,2}_0(\Omega)\subset\subset L^2(\Omega).$$
So, for $p>2$ the mapping $$(-\Delta_p-\Delta-\gamma(\cdot)^-)^{-1}-(-\Delta)^{-1}: L^2(\Omega)\subset W^{-1,p'}(\Omega)\rightarrow W^{1,p}_0(\Omega)\subset\subset L^2(\Omega)$$ is a compact mapping thanks to the Rellich-Kondrachov theorem. For simplicity, we write equation (\ref{bifurcation2}) as follows
\begin{equation}\label{bifurcation3}
  u=\lambda Au+T_{\lambda}(u),  
\end{equation}
with $Au=(-\Delta)^{-1}$ and $T_{\lambda}(u)=\left\{(-\Delta_p-\Delta-\gamma(\cdot)^-)^{-1}-(-\Delta)^{-1}\right\}(\lambda u).$\\
\\
The $L^2(\Omega)$ space with norm $\|\cdot\|_2$ admits the orthogonal decomposition 
\begin{equation}\label{bifurcation4}
    L^2(\Omega)=[e_k]\oplus [e_k]^{\perp},
\end{equation}
where $e_k$ is the normalized $k$-th eigenfunction associated to a simple eigenvalue $\lambda_k$ of the Dirichlet Laplace operator  (see \cite{Ruf}), and $[e_k]$ denotes the eigenspace of $\lambda_k.$ In view of (\ref{bifurcation4}), we set $H_k=[e_k]^{\perp}$ and $E_k=[e_k]$ for simplicity. An element of $u\in L^2(\Omega)$ in (\ref{bifurcation4}) will be seen as $u=\alpha e_k+v,$ with $\alpha\in\R$ and $v\in H_k.$\\
\\
The following result was obtained by B. Ruf in \cite{Ruf}. If $\gamma\geq 0$ satisfies 
\begin{equation}\label{bifurcation5}
  0<\gamma<\min\{\lambda_k-\lambda_{k-1}, \lambda_{k+1}-\lambda_k\},  
\end{equation}
then the following nonlinear eigenvalue problem
\begin{equation*}
    -\Delta u-\gamma u^-=\lambda u,
\end{equation*}
has a unique split eigenvalue $(\lambda^1_k,\lambda^2_k)\in [\lambda_k, \lambda_{k+1}]\times [\lambda_k, \lambda_{k+1}]$ of $-\Delta-\gamma(\cdot)^-$ and uniquely determined eigenfunctions $v^1_k, v^2_k $ in $L^2(\Omega)$ such that
$$-\Delta v^i_k-\gamma(v^i_k)^-=\lambda^i_kv^i_k,~~~~i=1,2.$$
For $\gamma=0$ the $k$-th split eigenvalue reduces to the eigenvalue $\lambda_k$ and $v^1_k, v^2_k$ to the eigenfunctions $e_k$, $-e_k$ respectively.\\
\\
Consider
\begin{equation}\label{preli1}
  \begin{cases}
    -\Delta u-\gamma u^-=\lambda u ~~~\text{on}~~\Omega,\\
    u=0~~~~~~~~~~~\text{on}~~\partial\Omega,~~~~\text{with}~~\gamma>0.
  \end{cases}
\end{equation}
We say that $u\in W^{1,2}_0(\Omega)$ with $u\not\equiv 0$ is a weak solution of (\ref{preli1}) if and only if 
\begin{equation}\label{preli2}
    \int_{\Omega}\nabla u\cdot \nabla v ~dx-\gamma\int_{\Omega}u^-v~dx=\lambda\int_{\Omega}uv~dx,
\end{equation}
for all $v\in W^{1,2}_0(\Omega).$
\begin{remark}
Suppose that $u$ is an eigenfunction associated to $\lambda$ in (\ref{preli2}). Then $\alpha u$ is also an eigenfunction associated to $\lambda$ for $\alpha>0.$
\end{remark}
From this remark, we aim to show that problem (\ref{bifurcation1}) has bifurcation branches emanating from the points $(\lambda^1_k, 0)$ and $(\lambda^2_k,0)$ in $\R\times L^2(\Omega)$ in the direction $\alpha>0$ when $p>2.$\\
In what follows, we assume that (\ref{bifurcation5}) holds. Let $\eta:=\frac{1}{2}\min\{|(e_k, v^1_k(\gamma))_2|, |(e_k, v^2_k(\gamma))_2|\}>0,$  and define for $\rho\in\R,$ $\rho>0,$
$$\pmb{K^+_{\rho,\eta}}(\gamma):=\{(\lambda,u)\in\R\times L^2(\Omega):~|\lambda-\lambda^1_k(\gamma)|<\rho,~~(e_k,u)_2>\eta \|u\|_2\},$$
$$\pmb{K^-_{\rho,\eta}}(\gamma):=\{(\lambda,u)\in\R\times L^2(\Omega):~|\lambda-\lambda^2_k(\gamma)|<\rho,~~(e_k,u)_2<-\eta \|u\|_2\},$$
where $(\cdot,\cdot)_2$ denotes the $L^2(\Omega)$ inner product. Let
$$B_{\rho}:=\{(\lambda,u)\in\R\times L^2(\Omega):~(|\lambda|^2+\|u\|^2_2)^{1/2}<\rho\}.$$ 

 We see that both $\pmb{K^+_{\rho,\eta}}(\gamma)$ and $\pmb{K}^-_{\rho,\eta}(\gamma)$ are two disjoints convex cones.
\\
For $\gamma>0$ fixed, let $\Sigma_{\gamma}$ denote the closure of the set of nontrivial solutions of problem (\ref{bifurcation1}). The following lemma localizes the possible solutions of equation (\ref{bifurcation1}). We refer to \cite[Lemma 1.24]{PH} for similar results.
\begin{lemma}\label{l1}
There exists $\rho_0>0$ such that for all $0<\rho<\rho_0:$
$$\left(\Sigma_{\gamma}\backslash\{(\lambda^1_k,0)\}\cap B_{\rho}\right)\subset \pmb{K^+_{\rho,\eta}}(\gamma).$$ If $(\lambda, u)\in \left(\Sigma_{\gamma}\backslash\{(\lambda^1_k,0)\}\cap B_{\rho}\right),$ then $u=\alpha e_k+v,$ where $|\alpha|>\eta \|u\|_2,$ and $|\lambda-\lambda^1_k|=o(1), v=o(\alpha)$ for $\alpha$ near zero. Moreover $\alpha>0.$ The same result holds for 
$\pmb{K^-_{\rho,\eta}}(\gamma).$
\end{lemma}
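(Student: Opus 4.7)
The proof plan is a standard normalization-by-contradiction argument, of the same flavour as Rabinowitz's localization of bifurcation branches. Suppose the statement failed; then for some $\rho_n\downarrow 0$ one could extract solutions $(\lambda_n,u_n)\in\Sigma_\gamma\setminus\{(\lambda^1_k,0)\}$ with $\lambda_n\to\lambda^1_k$, $u_n\to 0$ in $L^2(\Omega)$, and $(\lambda_n,u_n)\notin\pmb{K^+_{\rho_n,\eta}}(\gamma)$. Since eventually $|\lambda_n-\lambda^1_k|<\rho_n$, the only possible obstruction is the angular one, $(e_k,u_n)_2\le\eta\|u_n\|_2$. Normalizing $w_n:=u_n/\|u_n\|_2$ gives $\|w_n\|_2=1$ and $(e_k,w_n)_2\le\eta$.

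Using the positive $(p-1)$-homogeneity of $-\Delta_p$, equation (\ref{bifurcation1}) divided by $\|u_n\|_2$ becomes
\[
-\|u_n\|_2^{p-2}\,\Delta_p w_n-\Delta w_n-\gamma w_n^-=\lambda_n w_n\quad\text{in }\Omega.
\]
Testing against $w_n$ yields $\|w_n\|_{1,2}^2+\|u_n\|_2^{p-2}\|w_n\|_{1,p}^p=\lambda_n+\gamma\|w_n^-\|_2^2\le C$, so $(w_n)$ is bounded in $W^{1,2}_0(\Omega)$. Up to extraction, $w_n\rightharpoonup w$ in $W^{1,2}_0(\Omega)$ and $w_n\to w$ strongly in $L^2(\Omega)$ by Rellich--Kondrachov, whence $\|w\|_2=1$.

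The delicate step is to pass to the limit in the weak formulation against $\varphi\in C^\infty_c(\Omega)$. The linear and zeroth-order terms converge by weak (resp.\ strong $L^2$) convergence. For the $p$-Laplace contribution, the energy bound gives $\|w_n\|_{1,p}\le C\|u_n\|_2^{-(p-2)/p}$, hence
\[
\Bigl|\|u_n\|_2^{p-2}\int_{\Omega}|\nabla w_n|^{p-2}\nabla w_n\cdot\nabla\varphi\,dx\Bigr|\le\|u_n\|_2^{p-2}\|w_n\|_{1,p}^{p-1}\|\varphi\|_{1,p}\le C\|u_n\|_2^{(p-2)/p}\|\varphi\|_{1,p}\to 0.
\]
Thus $w$ weakly solves $-\Delta w-\gamma w^-=\lambda^1_k w$ with $\|w\|_2=1$. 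By the uniqueness up to positive scalar of the split eigenfunction $v^1_k$ recalled from \cite{Ruf}, one concludes $w=v^1_k$; passing to the limit in $(e_k,w_n)_2\le\eta$ then gives $(e_k,v^1_k)_2\le\eta$, contradicting $\eta=\tfrac12(e_k,v^1_k)_2$ under the sign convention $(e_k,v^1_k)_2>0$.

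The remaining assertions are by-products. Writing $u=\alpha e_k+v$ with $\alpha=(e_k,u)_2\in\R$ and $v\in H_k$, membership in $\pmb{K^+_{\rho,\eta}}(\gamma)$ yields $\alpha>\eta\|u\|_2>0$ directly, and $|\lambda-\lambda^1_k|=o(1)$ is the horizontal constraint of $B_\rho$. The convergence $w_n\to v^1_k$ shows that along the branch $\alpha_n/\|u_n\|_2\to(e_k,v^1_k)_2=2\eta$ and $v_n/\|u_n\|_2\to v^1_k-(e_k,v^1_k)_2\,e_k$ in $H_k$, which encodes the stated decomposition. The principal obstacle is the limit step for the quasi-linear term: it works exactly because $p>2$ makes $\|u_n\|_2^{p-2}$ decay fast enough to absorb the possible blow-up of $\|w_n\|_{1,p}$; in the range $1<p<2$ treated in Section \ref{Sinf}, this inequality reverses, which is precisely why the bifurcation there must take place from infinity rather than from the trivial solution.
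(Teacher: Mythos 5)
Your proposal is correct in substance and follows the same skeleton as the paper's proof: argue by contradiction, normalize $w_n=u_n/\|u_n\|_2$, use the $(p-1)$-homogeneity of $-\Delta_p$ so that the quasilinear term carries the vanishing factor $\|u_n\|_2^{p-2}$, pass to the limit to land on the split-eigenvalue problem $-\Delta w-\gamma w^-=\lambda^1_k w$, and contradict the angular condition through the definition of $\eta$. Where you genuinely diverge is in the execution of the compactness step: the paper rewrites the normalized equation in fixed-point form with the compact resolvent $(-\Delta-\mu)^{-1}$ and appeals to ``continuity of $\Delta_p$'', whereas you test with $w_n$ to get the energy bound $\|w_n\|_{1,2}^2+\|u_n\|_2^{p-2}\|w_n\|_{1,p}^p\le C$, extract a weak $W^{1,2}_0$-limit via Rellich--Kondrachov, and kill the quasilinear term with the explicit H\"older rate $\|u_n\|_2^{(p-2)/p}$; this is more self-contained and in fact supplies exactly the quantitative decay that the paper's shortcut leaves implicit (your version also has the correct exponent $p-2$, where the paper writes $p-1$). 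Two small caveats. First, the paper is more cautious at the identification step: it allows the limit to be $v^2_k$ in the degenerate case $\lambda^1_k=\lambda^2_k$, and because it phrases the violated inequality as $|(e_k,u_n)_2|\le\eta\|u_n\|_2$ the contradiction $2\eta\le|(e_k,\bar u)_2|\le\eta$ still works; your strict negation $(e_k,u_n)_2\le\eta\|u_n\|_2$ would \emph{not} be contradicted by $w=v^2_k$, so you should either exclude $\lambda^1_k=\lambda^2_k$ explicitly or first prove the inclusion into $\pmb{K^+_{\rho,\eta}}(\gamma)\cup\pmb{K^-_{\rho,\eta}}(\gamma)$ with the absolute value and then sort by the sign of $(e_k,w)_2$, as the paper implicitly does. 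Second, trivia: the $\gamma$-term in your tested identity should enter as $+\gamma\|w_n^-\|_2^2$ on the left (harmless for the bound), and, like the paper's own estimate $\|v\|_2\le(1+1/\eta)|\alpha|$, your limit $v_n/\|u_n\|_2\to P_kv^1_k$ actually yields $v=O(\alpha)$ rather than the stated $v=o(\alpha)$ when $\gamma>0$ --- a defect of the lemma's formulation shared by the original proof, not a flaw introduced by you.
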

\begin{proof}
Suppose by contradiction that there is no $\rho_0$ as in Lemma \ref{l1}. Then there exist two sequences $\rho_n$ and $(\lambda_n, u_n)\in \left(\Sigma_{\gamma}\backslash\{(\lambda^1_k,0)\}\cap B_{\rho_n}\right)$ such that $\rho_n\rightarrow 0,$ $|\lambda_n-\lambda^1_k|\leq \rho_n$, $u_n\rightarrow 0$ and $|(e_k,u_n)_2|\leq\eta \|u_n\|_2 $ as $n\rightarrow\infty.$ Since $u_n\not\equiv 0,$ we introduce the following change of variable $\Bar{u}_n=u_n/\|u_n\|_2$ in the following equation
$$-\Delta_p u_n-\Delta u_n-\gamma(u_n)^-=\lambda_n u_n.$$ Then it follows that $-\|u_n\|^{p-1}_2\Delta_p\bar{u}_n-\Delta\bar{u}_n-\gamma(\bar{u}_n)^-=\lambda_n\bar{u}_n,$ for $p>2.$ Let $\mu\in\varrho(-\Delta)$ (the resolvent set of $-\Delta$), hence
$$\bar{u}_n=(-\Delta-\mu)^{-1}[(\lambda_n-\mu)\bar{u}_n+\gamma(\bar{u}_n)^-+\|u_n\|^{p-1}_2\Delta_p\bar{u}_n].$$ Using the fact that $(-\Delta-\mu)^-$ is compact, then for a subsequence, $\bar{u}_n\rightarrow \bar{u}$ in $L^2(\Omega)$ as $n\rightarrow\infty,$ and the with the continuity of $\Delta_p$ we find that 
$$\bar{u}=(-\Delta-\mu)^{-1}[(\lambda^1_k-\mu)\bar{u}+\gamma(\bar{u})^-],$$ since $\lambda_n\rightarrow\lambda^1_k$ and $u_n\rightarrow 0$ as $n\rightarrow\infty.$ This implies that $\bar{u}=v^1_k$ (or possibly $\bar{u}=v^2_k$ if $\lambda^1_k=\lambda^2_k$). Thus we get $2\eta\leq |(e_k,\bar{u})|\leq\eta$ since $|(e_k,u_n)_2|\leq\eta \|u_n\|_2 .$ This is a contradiction. Hence there exists $\rho_0$ as above and for $0<\rho<\rho_0,$ if $(\lambda, u)\in \left(\Sigma_{\gamma}\backslash\{(\lambda^1_k,0)\}\cap B_{\rho}\right),$ then $u=\alpha e_k+v$ with $|\alpha|>\eta \|u\|_2.$ We have $\|v\|_2\leq |\alpha|+\|u\|_2\leq |\alpha|+|\alpha|/\eta$ and $|\lambda-\lambda^1_k|<\rho\rightarrow0$. So, for $\alpha$ near zero, we have $|\lambda-\lambda^1_k|=o(1)$ and $v=o(\alpha).$
\end{proof}
\begin{remark}
The $\rho_0$ in Lemma \ref{l1} can be chosen indepently of $\gamma\in [0,\bar{\gamma}]$, where $\bar{\gamma}$ satisfies (\ref{bifurcation5}) if we set
$$\eta:=\frac{1}{2}\inf\limits_{\gamma\in [0,\bar{\gamma}]}\{|(e_k, v^1_k(\gamma))_2|, |(e_k, v^2_k(\gamma))_2|\}.$$
\end{remark}
We employ a Lyapunov-Schmidt method, that is, we write equation (\ref{bifurcation3}) as equivalent system in $\R\times H_k$ and $\R\times E_k.$ We denote by $P_k$ and $Q_k$ the orthogonal projections onto $H_k$ and $E_k$ respectively, that is $P_k(\alpha e_k+v)=v$ and $Q_k(\alpha e_k+v)=\alpha e_k.$ We write equation (\ref{bifurcation3}) in $\R\times H_k$ and $\R\times E_k$ as 
\begin{equation}\label{ls1}
  \begin{cases}
    v=\lambda Av +P_k(T_{\lambda}(\alpha e_k+v)),\\
    \alpha e_k=\lambda\alpha Ae_k+Q_k(T_{\lambda}(\alpha e_k+v)).
  \end{cases}
\end{equation}
This is equivalent to 
\begin{equation}\label{ls2}
    v=\lambda Av +P_k(T_{\lambda}(\alpha e_k+v))
\end{equation}
and 
\begin{equation}\label{lss2}
    \alpha=\frac{\alpha\lambda}{\lambda_k}+(Q_k(T_{\lambda}(\alpha e_k+v)), e_k)_2.
\end{equation}
We set now, 
$$R_{\gamma}(\alpha,\lambda,v)=\frac{\alpha\lambda}{\lambda_k}+(Q_k(T_{\lambda}(\alpha e_k+v)), e_k)_2-\alpha +\lambda,$$
\begin{equation}\label{lss3}
    S_{\gamma}(\alpha,\lambda,v)=(\lambda-\lambda^1_k) Av +P_k(\tilde{T}_{\lambda}(\alpha e_k+v))
\end{equation}
We then define 
$$\psi_{\gamma}(\alpha,\lambda,v):=(\lambda-R_{\gamma}(\alpha,\lambda,v), v-S_{\gamma}(\alpha,\lambda,v)).$$ Then the solutions of equation (\ref{bifurcation3}) are the zeros of $\psi_{\gamma}$ and conversely. The trivial solutions of (\ref{bifurcation3}) correspond to the solutions $(0,\lambda,0),~\lambda\in\R$ of equations (\ref{lss2}) and (\ref{lss3}). Since $\psi_{\gamma}$ is continuous and compact with respect to $(\lambda,v)$ for fixed $\alpha>0$ , for fixed $\alpha $ it has the appropriate form for the use of the theory of Leray-Schauder degree.
\subsection{Bifurcation from trivial solutions} Here, we prove our bifurcation result.\\
Let $\sigma(-\Delta-\gamma(\cdot)^-):=\{\lambda\in \R:~\exists u\neq 0~\text{in}~L^2(\Omega):~-\Delta u-\gamma(u)^-=\lambda u\}. $
\begin{theorem}\label{bifurcationtri}
There exists a solution branch $\mathcal{C}_{\lambda^1_k}$ in $\Sigma_{\gamma}$ such that $(\lambda^1_k,0)\in \mathcal{C}_{\lambda^1_k}.$ Then $\mathcal{C}_{\lambda^1_k}$ either
\begin{enumerate}
    \item[(a)] meets infinity in $\R\times L^2(\Omega),$ or
    \item[(b)] meets $(\mu,0),$ where $\mu\in \sigma(-\Delta-\gamma(\cdot)^-)\backslash\{\lambda^1_k,\lambda^2_k\}.$
\end{enumerate}
The analogous statement holds for $\mathcal{C}_{\lambda^2_k}.$
\end{theorem}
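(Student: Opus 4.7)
The strategy is a global-bifurcation argument of Rabinowitz type, carried out on the fixed-point formulation $u = \lambda Au + T_\lambda(u)$ already set up in equation (\ref{bifurcation3}). Since $A = (-\Delta)^{-1}$ is linear and compact on $L^2(\Omega)$ and $T_\lambda$ is compact with $T_\lambda(u) = o(\|u\|_2)$ uniformly on bounded $\lambda$-intervals (this last property follows because $(-\Delta_p-\Delta-\gamma(\cdot)^-)^{-1}$ and $(-\Delta)^{-1}$ agree to leading order at the origin when $p>2$, a fact that must be verified carefully by testing the equation $-\Delta_p w - \Delta w - \gamma w^- = f$ for small $f$), the map $F(\lambda,u):= u - \lambda Au - T_\lambda(u)$ is a compact perturbation of the identity on $L^2(\Omega)$ and the Leray--Schauder degree is available.

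The main technical step is a local degree-jump computation at $\lambda^1_k$. Here I would use the Lyapunov--Schmidt splitting (\ref{ls2})--(\ref{lss2}) combined with Lemma \ref{l1}: near $(\lambda^1_k,0)$ any nontrivial solution is forced into the convex cone $\pmb{K^+_{\rho,\eta}}(\gamma)$, and on this cone one has $u^- = (\alpha e_k + v)^-$ with $\alpha > 0$ and $v = o(\alpha)$, so the nonsmooth term $\gamma(\cdot)^-$ is effectively smooth along the branch and the bifurcation equation (\ref{lss2}) can be reduced, after solving (\ref{ls2}) for $v = v(\alpha,\lambda)$ by the implicit function theorem via the contraction principle on $H_k$ (using that $\lambda^1_k$ is not an eigenvalue of $A$ on $H_k$ for the relevant parameter range), to a scalar equation in $(\alpha,\lambda)$. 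One then homotopes the reduced equation to the positively-homogeneous half-linear problem $-\Delta u - \gamma u^- = \lambda u$, whose only zero of the reduced function in $\pmb{K^+_{\rho,\eta}}(\gamma)\cap B_\rho$ is $(\lambda^1_k,0)$, and computes the Leray--Schauder index, showing that the index of the isolated zero changes sign as $\lambda$ crosses $\lambda^1_k$ inside the cone. This gives the required nontrivial degree change for $F(\lambda,\cdot)$ restricted to suitable neighborhoods in $\pmb{K^+_{\rho,\eta}}(\gamma)$.

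With the index change in hand, a standard Rabinowitz-type continuation argument (carried out relative to the closed set $\Sigma_\gamma$) produces a connected component $\mathcal C_{\lambda^1_k}$ of $\Sigma_\gamma \cup\{(\lambda^1_k,0)\}$ containing $(\lambda^1_k,0)$ that satisfies the alternative: either $\mathcal C_{\lambda^1_k}$ is unbounded in $\R\times L^2(\Omega)$, or it returns to the trivial line at another point $(\mu,0)$. Any such return point $\mu$ must lie in $\sigma(-\Delta - \gamma(\cdot)^-)$, as is seen by the same renormalization $\bar u_n = u_n/\|u_n\|_2$ used in the proof of Lemma \ref{l1}, which forces a limit eigenfunction of $-\Delta - \gamma(\cdot)^-$ at any candidate return point; Lemma \ref{l1} itself then excludes $\mu\in\{\lambda^1_k,\lambda^2_k\}$, since near those points the branch is confined to disjoint cones $\pmb{K^\pm_{\rho,\eta}}(\gamma)$ determined by the sign of $\alpha$, and $\mathcal C_{\lambda^1_k}$ lies in the positive cone.

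The proof for $\mathcal C_{\lambda^2_k}$ is completely analogous, with $\pmb{K^-_{\rho,\eta}}(\gamma)$ in place of $\pmb{K^+_{\rho,\eta}}(\gamma)$. The chief obstacle I foresee is the degree computation: because $\gamma(\cdot)^-$ is only positively homogeneous and not differentiable at points where $u$ changes sign, the usual Krasnosel'skii formula for the index in terms of algebraic multiplicity does not apply directly; one must exploit the cone structure and argue via homotopy within $\pmb{K^+_{\rho,\eta}}(\gamma)$, following the approach of \cite{RB}, to recover a clean index formula.
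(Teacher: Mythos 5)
Your overall architecture (fixed-point formulation, Lyapunov--Schmidt splitting, cone localization via Lemma \ref{l1}, a degree jump, then a Rabinowitz-type continuation with the return point forced into $\sigma(-\Delta-\gamma(\cdot)^-)$) is in the right spirit, but there are two genuine problems. First, your foundational claim that $T_{\lambda}(u)=o(\|u\|_2)$ is false: $T_{\lambda}$ is the difference $\{(-\Delta_p-\Delta-\gamma(\cdot)^-)^{-1}-(-\Delta)^{-1}\}(\lambda\,\cdot)$, and while the $p$-Laplacian contribution is indeed higher order for $p>2$, the term $\gamma(\cdot)^-$ is positively homogeneous of degree one, so $(-\Delta_p-\Delta-\gamma(\cdot)^-)^{-1}$ agrees to leading order with $(-\Delta-\gamma(\cdot)^-)^{-1}$, not with $(-\Delta)^{-1}$; hence $T_{\lambda}(u)$ is genuinely of size $O(\|u\|_2)$ on sign-changing $u$. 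This is not a cosmetic slip: if $T_\lambda$ really were $o(\|u\|_2)$, bifurcation would occur at the eigenvalues $\lambda_k$ of $A$, not at the split eigenvalues $\lambda^1_k,\lambda^2_k$, so the smallness hypothesis you invoke to set up the standard Rabinowitz framework is incompatible with the statement you are proving. (Compare (\ref{ruf}): in Ruf's setting the term $\gamma u^-$ is kept explicitly and only $N(\lambda,u)$ is $o(\|u\|)$.)

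Second, the crucial degree computation is exactly the point you leave vague, and the mechanism you propose is not the one that works here (nor the paper's). You assert that "the index of the isolated zero changes sign as $\lambda$ crosses $\lambda^1_k$ inside the cone"; but the Leray--Schauder index of $0$ cannot be computed by linearization (no Fr\'echet derivative at $0$ because of $\gamma(\cdot)^-$), a degree "restricted to the cone" is not defined without further work since $u=0$ sits at the vertex, and for half-linear problems the index typically does not change sign at $\lambda^1_k$ but drops to $0$ on $(\lambda^1_k,\lambda^2_k)$ -- establishing this is itself a nontrivial Dancer-type computation that you do not supply. The paper's proof avoids all of this by taking $\alpha$, not $\lambda$, as the jump parameter: for fixed small $\alpha$ it computes $\deg(\psi_{\gamma}(\alpha,\cdot,\cdot),\mathscr U_{\gamma},(0,0))$ in the $(\lambda,v)$ variables, uses the explicit homotopy $\Theta_\tau$ to an isomorphism to get $c_\pm=\pm1$ for $0<\alpha<\alpha_1$, and uses Lemma \ref{l1} (no solutions with $\alpha<0$ and $\lambda$ near $\lambda^1_k$, since those lie in the opposite cone near $\lambda^2_k$) to get degree $0$ for $\alpha<0$; the jump across $\alpha=0$ then produces $\mathcal C_{\lambda^1_k}$. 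Relatedly, your proposed contraction/IFT reduction solving (\ref{ls2}) for $v(\alpha,\lambda)$ is not justified: the $\gamma$-part of $T_\lambda$ has a Lipschitz constant proportional to $\gamma$, which is not small, so the contraction property near $\lambda^1_k$ needs a quantitative argument you do not give -- and the paper's proof never needs this reduction, keeping (\ref{ls2}) and (\ref{lss2}) as a system. To repair your proof you should either carry out the half-linear index computation in $\lambda$ rigorously, or switch to the $\alpha$-parameter degree jump as in \cite{RB} and the paper.
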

\noindent This bifurcation result is explained by Figure \ref{fig} below.
\begin{figure}	[!h]\centering
	\includegraphics[width=14cm,height=8cm]{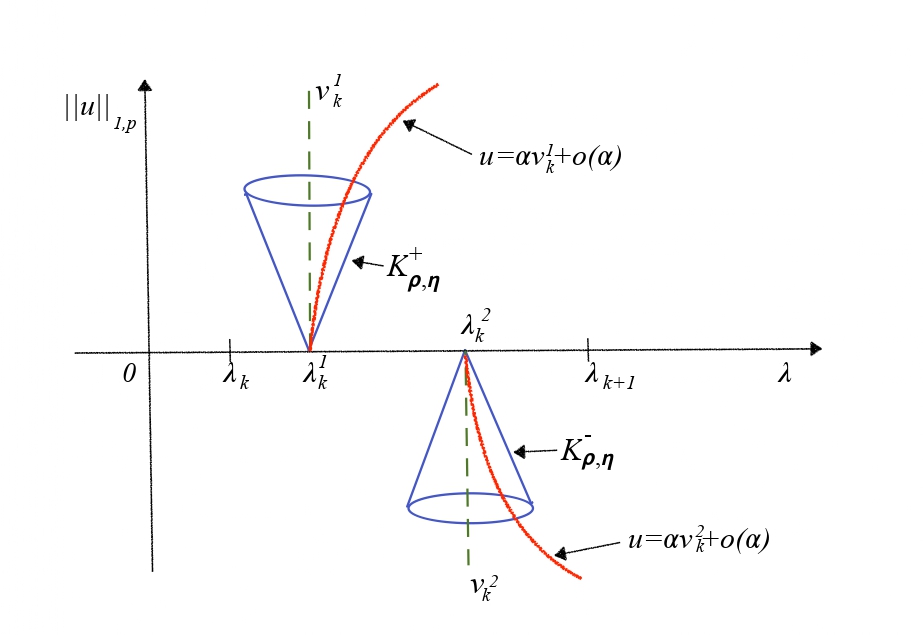}
	\caption{Bifurcation from trivial solutions.} \label{fig}
\end{figure}
	\newpage
	\begin{proof}
Let $\mathscr{U}_{\gamma}=\{(\lambda,v)\in \R\times H_k~/|\lambda-\lambda^1_k(\gamma)|<\varepsilon_1,~~\varrho_1>\|v\|\},$ where $\gamma\in [0,\bar{\gamma}]$ with $\bar{\gamma}$ arbitrary but fixed and satisfying $0<\bar{\gamma}<\lambda_{k+1}-\lambda_k.$ By Lemma \ref{l1} all the nontrivial solutions $(\alpha, \lambda, v)$ of equations (\ref{ls2}) and  (\ref{lss2}) near $(0,\lambda^1_k,0)$ satisfy $(\lambda,v)\in\mathscr{U}_{\gamma}$ if $|\alpha|<\alpha_1$ provided that $\alpha_1,\varepsilon_1,\varrho_1$ are sufficiently small and $\gamma\in [0,\bar{\gamma}].$ Therefore $\psi_{\gamma}(\alpha,\lambda,v)\neq 0$ on $\partial\mathscr{U}_{\gamma}$ for $0<\alpha<\alpha_1$ and $\gamma\in [0,\bar{\gamma}].$ Consequently the Leray-Schauder degree $\deg(\psi_{\gamma}(\alpha,\lambda,v), \mathscr{U}_{\gamma},(0,0))$ is well defined for $0<|\alpha|<\alpha_1.$ Since $\lambda^1_k$ is continuously dependent on $\gamma$ (see \cite[Lemma 2.4]{Ruf}, $\bigcup\limits_{[0,\bar{\gamma}]}\{\gamma\}\times \mathscr{U}_{\gamma}$ is a bounded open sets in $[0,\bar{\gamma}]\times\R\times H_k$. By the homotopy invariance of the Leray-Schauder degree, one has 
$$\deg(\psi_{\gamma}(\alpha,\lambda,v), \mathscr{U}_{\gamma},(0,0))=c_{\pm},~~\text{for}~~0<|\alpha|<\alpha_1.$$ To evaluate $c_{\pm}$, we consider the family of operators $\Theta_{\tau}$ defined as
$$\Theta_{\tau}(\alpha,\lambda,v):=\left(\lambda-(\frac{\alpha}{\lambda_k}+1)\lambda-\frac{\tau}{\lambda_k}(Q_k(T_{\lambda}(\alpha e_k+v)), e_k)_2+\alpha, v-\tau S_{\gamma}(\alpha,\lambda,v)\right)$$ for $\tau\in [0,1]$. As above, we can assume that $\Theta_{\tau}(\alpha,\lambda,v)\neq 0$ on $\partial\mathscr{U}_{\gamma}$ for $\tau\in [0,1]$ and $\alpha\in (-\alpha_1,\alpha_1).$ Using again the homotopy invariance of the degree implies that
\begin{equation}\label{degree}
    \deg(\Theta_{\tau}(\alpha,\lambda,v), \mathscr{U}_{\gamma},(0,0))=c_{\pm},~~\text{for}~~0<|\alpha|<\alpha_1~~\text{and}~~\tau\in[0,1].
\end{equation}
Therefore to evaluate $c_{\pm},$ it suffices to take $\tau=0,$ and 
$$\Theta_{0}(\alpha,\lambda,v)=\left(\lambda-(\frac{\alpha}{\lambda_k}+1)\lambda+\alpha, v\right).$$ The only zero that $\Theta_0$ possesses is $v=0,$ $\lambda=\lambda_k.$ We note that $\Theta_{0}(\alpha,\lambda,v)$ is an isomorphism on $\R\times H_k,$ and hence
$$c_{\pm}=\text{ind}(\Theta_{0}(\alpha,\lambda,v), (\lambda_k,0))=\pm 1.$$ We observe that 
\begin{equation}\label{ls3}
    \text{ind}(\Theta_{0}(\alpha,\lambda,v), (\lambda_k,0))=\deg(\hat{\Theta}_{0}(\alpha,\lambda,v), \mathscr{U}_{\gamma},(0,0))=\deg(\Theta_{0}(\alpha,\lambda,v), \mathscr{U}_{\gamma},(-\alpha,0)),
\end{equation}
where $\hat{\Theta}_{0}(\alpha,\lambda,v)=\left(\lambda-(\frac{\alpha}{\lambda_k}+1)\lambda+\alpha, v\right),$ which is the homogeneous (in $\alpha$) linear operator corresponding to $\Theta_{0}(\alpha,\lambda,v)$. For $\alpha\neq 0,$ $\hat{\Theta}_{0}(\alpha,\lambda,v)$ is an isomorphism and then
\begin{equation}\label{ls4}
   \deg(\hat{\Theta}_{0}(\alpha,\lambda,v), \mathscr{U}_{\gamma},(-\alpha,0))= \text{ind}(\hat{\Theta}_{0}(\alpha,\lambda,v), (-\alpha,0))=\text{ind}(\hat{\Theta}_{0}(\alpha,\lambda,v), (0,0)).
\end{equation}
Since $\left(\lambda-\gamma(\frac{\alpha}{\lambda_k}+1)\lambda+\alpha, v\right)$ has no characteristic values $\gamma$ in $(0,1)$ for $\alpha<0$ and one characteristic value in the interval $(0,1)$ for $\alpha>0,$ the theorem on change on index and relation (\ref{degree}) and (\ref{ls4}) imply that $c_{\pm}=\pm1.$ On the other hand, by Lemma \ref{l1} we have that
$$(-\Delta_p-\Delta)(\alpha e_k+v)-\bar{\gamma}(\alpha e_k+v)^-=\lambda (\alpha e_k+v),~~v\in H_k$$ has no solution for $-\alpha_1<\alpha<0$ and $|\lambda-\lambda^1_k(\bar{\gamma})|<\rho_0.$ Since this equation is equivalent to equation (\ref{ls2}), we infer to
$$\deg(\psi_{\bar{\gamma}}(\alpha,\lambda,v), \mathscr{U}_{\bar{\gamma}},(0,0))=0,~~\text{for}~~-\alpha<\alpha<0.$$ With this jump in the degree, we obtain the existence of the solution branch $\mathcal{C}_{\lambda^1_k}.$ The branch $\mathcal{C}_{\lambda^2_k}$ is obtained similarly.
	\end{proof}
\subsection{Bifurcation from infinity}\label{Sinf} Here, we prove a  bifurcation from infinity result. As in \cite{ZB}, we introduce the following change of variable:
for $u\in W^{1,2}_0(\Omega),~u\neq 0,$ we set $v=u/\|u\|_{1,2}^{2-\frac{1}{2}p}$ for all $1<p<2.$  We have $\|v\|_{1,2}=\frac{1}{\|u\|_{1,2}^{1-\frac{1}{2}p}}$ and 
$$|\nabla v|^{p-2}\nabla v=\frac{1}{\|u\|_{1,2}^{(2-\frac{1}{2}p)(p-1)}} |\nabla u|^{p-2}\nabla u.$$
Introducing this change of variable in (\ref{weakform}), we find that, 
\begin{equation}\label{em3}
\|u\|_{1,2}^{(2-\frac{1}{2}p)(p-2)}\int_{\Omega}|\nabla v|^{p-2}\nabla v\cdot\nabla\varphi~dx+\int_{\Omega}\nabla v\cdot\nabla\varphi~dx=\lambda_+\int_{\Omega}v^+\varphi~dx-\lambda_-\int_{\Omega}v^-\varphi~dx
\end{equation}
for every $\varphi\in W^{1,2}_0(\Omega)$.
But, on the other hand, we have
$$\|v\|^{p-4}_{1,2}=\frac{1}{\|u\|_{1,2}^{(1-\frac{1}{2}p)(p-4)}}=\frac{1}{\|u\|_{1,2}^{(2-\frac{1}{2}p)(p-2)}}.$$ Consequently it follows that equation (\ref{em3}) is equivalent to
\begin{equation}\label{em4}
\|v\|_{1,2}^{4-p}\int_{\Omega}|\nabla v|^{p-2}\nabla v\cdot\nabla\varphi~dx+\int_{\Omega}\nabla v\cdot\nabla\varphi~dx=\lambda_+\int_{\Omega}v^+\varphi~dx-\lambda_-\int_{\Omega}v^-\varphi~dx,
\end{equation}
for every $\varphi\in W^{1,2}_0(\Omega)$.
This leads to the following nonlinear eigenvalue problem ($1<p<2$)
\begin{equation}\label{em5}
\left\{
\begin{array}{l}
-\|v\|_{1,2}^{4-p}\Delta_p v-\Delta v =\lambda_+v^+-\lambda_-v^-~~\text{in $\Omega$},\\
v = \displaystyle 0~~~~~~~~~~~~~~~~\text{on $\partial\Omega$}.
\end{array}
\right.
\end{equation}
\begin{remark}\label{binfty}
With this transformation, we have that the pair $(\lambda^1_k,\infty)$ or $(\lambda^2_k,\infty)$ is a bifurcation point for the problem (\ref{equa3}) if and only if the pair $(\lambda^1_k,0)$ or $(\lambda^2_k,0)$ is a bifurcation point for the problem (\ref{em5}).
\end{remark}
For $v\in W^{1,2}_0(\Omega),$ we have $$-\|v\|_{1,2}^{4-p}\Delta_p v-\Delta v=\lambda_+v^+-\lambda_-v^-\Leftrightarrow -\|v\|_{1,2}^{4-p}\Delta_p v-\Delta v-(\lambda_+-\lambda_-)v^-=\lambda_+v.$$ We fix $\gamma=\lambda_+-\lambda_->0$ and we consider $\lambda:=\lambda_+$ as a parameter of the equation
\begin{equation}\label{bifurcationinf1}
  \begin{cases}
    -\|v\|_{1,2}^{4-p}\Delta_p v-\Delta v-\gamma v^-=\lambda v ~~~\text{on}~~\Omega,\\
    v=0~~~~~~~~~~~\text{on}~~\partial\Omega.
  \end{cases}
\end{equation}
Let us consider a small ball $B_r :=\{~w~\in W^{1,2}_0(\Omega):~~~\|w\|_{1,2}< r~\},$ and
consider the operator $$A :=-\|\cdot\|_{1,2}^{4-p}\Delta_p-\Delta-\gamma(\cdot)^- : W^{1,2}_0(\Omega)\rightarrow W^{-1,2}(\Omega).$$
\begin{proposition}\label{invertb}
Let $1<p<2.$ There exists $r>0$ such that the mapping \\$A : B_r(0)\subset W^{1,2}_0(\Omega)\rightarrow W^{-1,2}(\Omega)$ is invertible, with a continuous inverse.
\end{proposition}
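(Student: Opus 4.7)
The plan is to follow the template of Lemma \ref{invertiblity}: establish that $A$ is continuous and strongly monotone on $B_r$ for $r>0$ sufficiently small, and then invoke the standard Minty--Zarantonello framework to conclude that $A|_{B_r}$ is a homeomorphism onto its image with Lipschitz continuous inverse.

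Write $A=A_0+N$ with $A_0=-\Delta-\gamma(\cdot)^-$ and $N(v)=-\|v\|_{1,2}^{4-p}\Delta_pv$. First I would prove that $A_0$ is strongly monotone on all of $W^{1,2}_0(\Omega)$: the $-\Delta$ contribution yields $\|u-v\|_{1,2}^2$ exactly, while the fact that $t\mapsto-t^-=\min(t,0)$ is nondecreasing forces $-\gamma\int_\Omega(u^--v^-)(u-v)\,dx\geq 0$ pointwise. Thus
\[
\langle A_0u-A_0v,u-v\rangle\geq\|u-v\|_{1,2}^2\qquad\text{for all }u,v\in W^{1,2}_0(\Omega).
\]

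Next I would control the perturbation. Splitting
\[
\langle N(u)-N(v),u-v\rangle=\|u\|_{1,2}^{4-p}\langle-\Delta_pu+\Delta_pv,u-v\rangle+(\|u\|_{1,2}^{4-p}-\|v\|_{1,2}^{4-p})\langle-\Delta_pv,u-v\rangle,
\]
the first term is nonnegative by the classical monotonicity of $-\Delta_p$ (valid for every $p>1$). For the second, combine the mean-value bound $|\|u\|_{1,2}^{4-p}-\|v\|_{1,2}^{4-p}|\leq(4-p)r^{3-p}\|u-v\|_{1,2}$ on $B_r$, the estimate $|\langle-\Delta_pv,\varphi\rangle|\leq\|v\|_{1,p}^{p-1}\|\varphi\|_{1,p}$ from H\"older's inequality, and the continuous embedding $W^{1,2}_0(\Omega)\hookrightarrow W^{1,p}_0(\Omega)$ valid for $1<p<2$ on the bounded domain $\Omega$. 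This yields a bound of order $r^{3-p}\cdot r^{p-1}\|u-v\|_{1,2}^2=O(r^2)\|u-v\|_{1,2}^2$; shrinking $r$ so that the implicit constant is at most $1/2$ produces
\[
\langle Au-Av,u-v\rangle\geq\tfrac{1}{2}\|u-v\|_{1,2}^2\qquad\text{for all }u,v\in B_r,
\]
whence the injectivity of $A$ on $B_r$ and the Lipschitz estimate $\|u-v\|_{1,2}\leq 2\|Au-Av\|_{W^{-1,2}(\Omega)}$, i.e.\ the continuity of $A^{-1}$ on $A(B_r)$. Continuity of $A$ itself is obtained by reproducing the computation in Lemma \ref{invertiblity}, replacing Lemma \ref{plindvist}(b) by its counterpart for $1<p<2$, namely $||x|^{p-2}x-|y|^{p-2}y|\leq c|x-y|^{p-1}$, and again using the embedding $W^{1,2}_0\hookrightarrow W^{1,p}_0$.

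The main obstacle is a scaling issue: $N$ is cubic near the origin in norm, $\|N(v)\|_{W^{-1,2}(\Omega)}\lesssim\|v\|_{1,2}^3$, yet only $(p-1)$-H\"older in $v$ because of the $p$-Laplacian, so a naive Banach contraction argument applied to $T_f(v)=A_0^{-1}(f-N(v))$ does not close. The monotonicity approach sidesteps this because the duality pairing with $u-v$ supplies the extra factor $\|u-v\|_{1,2}$, converting the $(p-1)$-H\"older estimate into the quadratic bound $O(r^2)\|u-v\|_{1,2}^2$ needed. If surjectivity onto a neighborhood of $0\in W^{-1,2}(\Omega)$ is also required for the ensuing bifurcation argument, one obtains it by a continuation argument applied to $A_t=A_0+tN$, each of which is strongly monotone on $B_r$ uniformly in $t\in[0,1]$, starting from the global homeomorphism $A_0:W^{1,2}_0(\Omega)\to W^{-1,2}(\Omega)$.
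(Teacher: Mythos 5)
Your proposal is correct and follows essentially the same route as the paper: the same splitting of $\langle A(u)-A(v),u-v\rangle$, nonnegativity of the $-\gamma(\cdot)^-$ term and of the $\|u\|_{1,2}^{4-p}$-weighted $p$-Laplacian term by monotonicity, the mean value theorem bound $\bigl|\|u\|_{1,2}^{4-p}-\|v\|_{1,2}^{4-p}\bigr|\leq(4-p)r^{3-p}\|u-v\|_{1,2}$, and H\"older plus the embedding $W^{1,2}_0(\Omega)\hookrightarrow W^{1,p}_0(\Omega)$ to get strong monotonicity of order $1-O(r^{2})$ on $B_r$, then continuity to conclude. Your closing remarks on the $\min(t,0)$ monotonicity and on local surjectivity via continuation are slightly more careful than the paper's treatment, but the argument is the same.
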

We first remark that for $\gamma>0,$ $-\gamma(u^--v^-,u-v)_2\geq 0.$ Indeed, we have 
\begin{equation*}
    \begin{split}
        (u^--v^-,u-v)_2&=\int_{\Omega}(u^--v^-)(u-v)dx\\
        &=\int_{\Omega}(u^--v^-)(u^+-v^+)dx-\int_{\Omega}|u^--v^-|^2dx\\
        &=\frac{1}{4}\left[(|u|-|v|)^2-\|u-v\|^2_2\right]-\|u^--v^-\|^2_2\leq 0,
    \end{split}
\end{equation*}
using the definition of $u^{\pm}$ and $v^{\pm}$.
\begin{proof}[\textup{Proof of Proposition \ref{invertb}}]
In order to prove that the operator $A$ is invertible with a continuous inverse, it is enough to prove that $A$ is continuous strongly monotone operator. 
We show that there exists $\delta>0$ such that $$\langle A(u)-A(v), u-v\rangle\geq \delta\|u-v\|^2_{1,2}, ~~\text{for}~~u,v\in B_r(0)\subset W^{1,2}_0(\Omega)$$ with $r>0$ sufficiently small.\\
Indeed, using that $-\Delta_p$ is strongly monotone on $W^{1,p}_0(\Omega)$ on the one hand and the H\"older inequality on the other hand, we have
\begin{eqnarray}\label{bf1}\notag
\langle A(u)-A(v), u-v\rangle&=&\|\nabla u-\nabla v\|^2_{2}+\left(\|u\|_{1,2}^{4-p}(-\Delta_pu)-\|v\|_{1,2}^{4-p}(-\Delta_pv), u-v \right)_2\\ \notag &-&\gamma(u^--v^-,u-v)_2\\ \notag
&=&\|u-v\|^2_{1,2}+\|u\|_{1,2}^{4-p}\left((-\Delta_pu)-(-\Delta_p v), u-v\right)_2\\ \notag
&+& \left(\|u\|_{1,2}^{4-p}-\|v\|_{1,2}^{4-p} \right)\left(-\Delta_pv, u-v\right)_2-\gamma(u^--v^-,u-v)_2\\ \notag
&\geq & \|u-v\|^2_{1,2}-\left|\|u\|_{1,2}^{4-p}-\|v\|_{1,2}^{4-p}\right|\|\nabla v\|_{p}^{p-1}\|\nabla (u-v)\|_{p}\\
&\geq& \|u-v\|^2_{1,2}-\left|\|u\|_{1,2}^{4-p}-\|v\|_{1,2}^{4-p}\right|C\| v\|_{1,2}^{p-1}\| u-v\|_{1,2}.
\end{eqnarray}
Now, we obtain by the Mean Value Theorem  that there exists $\theta\in [0,1]$ such that
\begin{eqnarray*}
\left|\|u\|_{1,2}^{4-p}-\|v\|_{1,2}^{4-p}\right|&=&\left|\frac{d}{dt}\left(\|u+t(v-u)\|^2_{1,2}\right)^{2-\frac{1}{2}p}|_{t=\theta} (v-u)\right|\\
&=&\left|(2-\frac{1}{2}p)\left(\|u+\theta(v-u)\|^2_{1,2}\right)^{1-\frac{1}{2}p}2\left(u+\theta(v-u),v-u\right)_{1,2}\right|\\
&\leq& (4-p)\|u+\theta(v-u)\|^{2-p}_{1,2}\|u+\theta(v-u)\|_{1,2}\|u-v\|_{1,2}\\
&=&(4-p)\|u+\theta(v-u)\|_{1,2}^{3-p}\|u-v\|_{1,2}\\
&\leq & (4-p)\left((1-\theta)\|u\|_{1,2}+\theta\|v\|_{1,2}\right)^{3-p}\|u-v\|_{1,2}\\
&\leq & (4-p) r^{3-p}\|u-v\|_{1,2}.
\end{eqnarray*}
Hence, continuing with the estimate of equation (\ref{bf1}), we get
$$\langle A(u)-A(v), u-v\rangle\geq \|u-v\|^2_{1,2}(1-(4-p) r^{3-p}Cr^{p-1})=\|u-v\|^2_{1,2}(1-C'r^{2}),$$ and thus the claim, for $r>0$ small enough. Hence, the operator $A$ is strongly monotone on $B_r(0)$ and it is continuous, and hence the claim follows.
\end{proof}
Using Proposition \ref{invertb}, we write equation (\ref{bifurcationinf1}) as the following form:
\begin{equation}\label{bifurcationinf2}
    v=\lambda(-\Delta)^{-1}v+\left\{(-\|\cdot\|_{1,2}^{4-p}\Delta_p-\Delta-\gamma(\cdot)^-)^{-1}-(-\Delta)^{-1}\right\}(\lambda v),
\end{equation}
where we consider
$$(-\|\cdot\|_{1,2}^{4-p}\Delta_p-\Delta-\gamma(\cdot)^-)^{-1}: L^2(\Omega)\rightarrow W^{1,2}_0(\Omega)\subset\subset L^2(\Omega),$$
and 
$$(-\Delta)^{-1}: L^2(\Omega)\subset W^{-1,2}(\Omega)\rightarrow W^{1,2}_0(\Omega)\subset\subset L^2(\Omega).$$
So, for $1<p<2$ the mapping $$(-\|\cdot\|_{1,2}^{4-p}\Delta_p-\Delta-\gamma(\cdot)^-)^{-1}-(-\Delta)^{-1}: L^2(\Omega)\subset W^{-1,2}(\Omega)\rightarrow W^{1,2}_0(\Omega)\subset\subset L^2(\Omega)$$ is a compact mapping thanks to the Rellich-Kondrachov theorem.\\
\\
For $\gamma>0$ fixed, let $\mathcal{P}_{\gamma}$ denote the closure of the set of nontrivial solutions of problem (\ref{bifurcationinf1}).
The following lemma localizes the possible solutions of equation (\ref{bifurcationinf1}).
\begin{lemma}\label{linf1}
There exists $\rho_0>0$ such that for all $0<\rho<\rho_0:$
$$\left(\mathcal{P}_{\gamma}\backslash\{(\lambda^1_k,0)\}\cap B_{\rho}\right)\subset \pmb{K^+_{\rho,\eta}}(\gamma).$$ If $(\lambda, u)\in \left(\mathcal{P}_{\gamma}\backslash\{(\lambda^1_k,0)\}\cap B_{\rho}\right),$ then $u=\alpha e_k+v,$ where $|\alpha|>\eta \|u\|_2,$ and $|\lambda-\lambda^1_k|=o(1), v=o(1)$ for $\alpha$ near zero. Moreover $\alpha>0.$ The same statement hold for $\pmb{K^-_{\rho,\eta}}(\gamma).$
\end{lemma}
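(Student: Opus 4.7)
The plan is to adapt the contradiction argument of Lemma \ref{l1}, the main new ingredient being a self-testing bound that upgrades $L^2$-smallness to $W^{1,2}_0$-smallness and tames the nonstandard factor $\|v\|_{1,2}^{4-p}$ in front of $\Delta_p$. Suppose the conclusion fails. Then there exist $\rho_n \to 0$ and $(\lambda_n, v_n) \in (\mathcal{P}_\gamma \setminus \{(\lambda^1_k, 0)\}) \cap B_{\rho_n}$ with $\lambda_n \to \lambda^1_k$, $v_n \to 0$ in $L^2(\Omega)$, and $(e_k, v_n)_2 \leq \eta\|v_n\|_2$ (encompassing both the ``middle'' region and the ``$\pmb{K^-}$'' violation of the target $\pmb{K^+_{\rho,\eta}}$ containment).

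Testing the weak form of (\ref{bifurcationinf1}) with $\varphi = v_n$ yields
\[
\|v_n\|_{1,2}^{4-p}\|v_n\|_{1,p}^p + \|v_n\|_{1,2}^2 + \gamma\|v_n^-\|_2^2 = \lambda_n \|v_n\|_2^2.
\]
Dropping the first and third non-negative terms gives $\|v_n\|_{1,2} \leq \sqrt{\lambda_n}\,\|v_n\|_2 \to 0$, so the renormalization $\bar v_n := v_n/\|v_n\|_2$ is bounded in $W^{1,2}_0(\Omega)$, hence also in $W^{1,p}_0(\Omega)$ since $p<2$. Using the homogeneity identity $\Delta_p v_n = \|v_n\|_2^{p-1}\Delta_p \bar v_n$, the renormalized profile satisfies
\[
-c_n\Delta_p \bar v_n - \Delta \bar v_n - \gamma \bar v_n^- = \lambda_n \bar v_n, \qquad c_n := \|v_n\|_{1,2}^{4-p}\|v_n\|_2^{p-2},
\]
and the self-testing bound above forces $c_n \leq \lambda_n^{(4-p)/2}\|v_n\|_2^{2} \to 0$.

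Compact embedding $W^{1,2}_0 \hookrightarrow L^2$ supplies a subsequence with $\bar v_n \to \bar v$ strongly in $L^2$ and weakly in $W^{1,2}_0$, with $\|\bar v\|_2 = 1$. For any test $\varphi \in W^{1,2}_0(\Omega) \subset W^{1,p}_0(\Omega)$, H\"older's inequality gives $\left|c_n \int_\Omega |\nabla \bar v_n|^{p-2}\nabla \bar v_n \cdot \nabla \varphi\,dx\right| \leq c_n \|\bar v_n\|_{1,p}^{p-1}\|\varphi\|_{1,p} \to 0$; the remaining terms pass to the limit by weak convergence in $W^{1,2}_0$ and Lipschitz continuity of $u \mapsto u^-$ in $L^2$. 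Therefore $\bar v$ solves $-\Delta \bar v - \gamma \bar v^- = \lambda^1_k \bar v$ weakly, and since $\|\bar v\|_2 = 1$ it must coincide with $v^1_k$: the rival $v^2_k$ is excluded because condition (\ref{bifurcation5}) guarantees $\lambda^1_k \neq \lambda^2_k$ (see \cite{Ruf}). Consequently $(e_k, \bar v)_2 = (e_k, v^1_k)_2 \geq 2\eta$, which contradicts $(e_k, \bar v_n)_2 \leq \eta$ passed to the limit.

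This gives the $\pmb{K^+_{\rho,\eta}}$ localization; the decomposition $u = \alpha e_k + v$ with $|\lambda - \lambda^1_k| = o(1)$ and $\|v\|_2 \leq |\alpha|(1+1/\eta) = o(1)$ as $\alpha \to 0$ is then immediate, and $\alpha > 0$ because $(e_k, u)_2 > \eta\|u\|_2 > 0$. The statement near $(\lambda^2_k, 0)$ for $\pmb{K^-_{\rho,\eta}}$ is proved identically, replacing $v^1_k$ by $v^2_k$. The main technical difficulty is handling the $\Delta_p$ contribution in the limit: a priori the $\|v_n\|_2^{p-2}$ factor in $c_n$ blows up since $p-2<0$, but this is exactly offset by the $\|v_n\|_{1,2}^{4-p}$ factor once one has the right pairing of the $\|\cdot\|_{1,2}$ and $\|\cdot\|_2$ norms, which is precisely what the self-testing step delivers.
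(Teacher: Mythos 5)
Your overall strategy is exactly the adaptation the paper has in mind (the paper simply says the proof is ``similar to Lemma \ref{l1}''), and the genuinely new ingredient you supply is correct and is precisely what the adaptation needs: testing \eqref{bifurcationinf1} with $v_n$ to get $\|v_n\|_{1,2}^2\le\lambda_n\|v_n\|_2^2$, so that after the renormalization $\bar v_n=v_n/\|v_n\|_2$ the coefficient $c_n=\|v_n\|_{1,2}^{4-p}\|v_n\|_2^{p-2}\le\lambda_n^{(4-p)/2}\|v_n\|_2^2\to0$, which tames the a priori blow-up of $\|v_n\|_2^{p-2}$ for $1<p<2$; the passage to the limit via boundedness in $W^{1,2}_0$, Rellich, and the H\"older bound on the $\Delta_p$-term is fine.

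There is, however, one genuine gap, in the limit-identification step. You discard the possibility $\bar v=v^2_k$ by asserting that condition \eqref{bifurcation5} guarantees $\lambda^1_k\neq\lambda^2_k$. The paper makes no such claim; on the contrary, its proof of Lemma \ref{l1} explicitly allows ``$\bar u=v^1_k$ (or possibly $\bar u=v^2_k$ if $\lambda^1_k=\lambda^2_k$)'', and Ruf's result quoted in Section \ref{S2} only places $(\lambda^1_k,\lambda^2_k)$ in $[\lambda_k,\lambda_{k+1}]^2$. Your contradiction really needs this exclusion: you negate the $K^+_{\rho,\eta}$ condition as the one-sided inequality $(e_k,\bar v_n)_2\le\eta$, and if the limit were $v^2_k$ (for which $(e_k,v^2_k)_2$ is expected to be negative, since $v^2_k\to -e_k$ as $\gamma\to0$), the inequality $(e_k,\bar v)_2\le\eta$ produces no contradiction at all. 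So either justify $\lambda^1_k\neq\lambda^2_k$ (which you cannot do from the material in this paper), or do what the paper's own proof of Lemma \ref{l1} does: run the argument with the two-sided condition $|(e_k,u_n)_2|\le\eta\|u_n\|_2$, which in the limit gives $2\eta\le|(e_k,\bar v)_2|\le\eta$ whichever of $v^1_k,v^2_k$ the limit is, and therefore proves the localization $|(e_k,u)_2|>\eta\|u\|_2$ (i.e.\ membership in $K^+_{\rho,\eta}(\gamma)\cup K^-_{\rho,\eta}(\gamma)$), after which the sign of $\alpha=(e_k,u)_2$ and the continuity of the branch are used to separate the two cones near $(\lambda^1_k,0)$ and $(\lambda^2_k,0)$. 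A minor additional point, shared with the paper: points of $\mathcal{P}_\gamma$ are closure points, so strictly speaking you should test the equation along approximating genuine solutions before passing to the limit.
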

The proof of this lemma is similar to the proof of Lemma \ref{l1} and it is left to the reader.
\begin{theorem}\label{infe}
There exists a solution branch $\mathcal{S}_{\lambda^1_k}$ in $\mathcal{P}_{\gamma}$ such that $(\lambda^1_k,0)\in \mathcal{S}_{\lambda^1_k}.$ Then $\mathcal{S}_{\lambda^1_k}$ either
\begin{enumerate}
    \item[(a)] meets infinity in $\R\times L^2(\Omega),$ or
    \item[(b)] meets $(\mu,0),$ where $\mu\in \sigma(-\Delta-\gamma(\cdot)^-)\backslash\{\lambda^1_k,\lambda^2_k\}.$
\end{enumerate}
The analogous statement holds for $\mathcal{S}_{\lambda^2_k}.$
\end{theorem}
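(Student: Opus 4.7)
The plan is to follow the same Lyapunov--Schmidt plus Leray--Schauder degree scheme used in the proof of Theorem~\ref{bifurcationtri}, with the difference that the $p$-Laplacian term is now weighted by $\|v\|_{1,2}^{4-p}$, which vanishes as $v \to 0$. By Proposition~\ref{invertb}, for $\|v\|_{1,2}$ small the operator $-\|\cdot\|_{1,2}^{4-p}\Delta_p - \Delta - \gamma(\cdot)^-$ is invertible from a small ball in $W^{1,2}_0(\Omega)$ into $W^{-1,2}(\Omega)$, so equation~(\ref{bifurcationinf2}) can be rewritten as
\[
v = \lambda A v + \widetilde{T}_\lambda(v), \qquad A = (-\Delta)^{-1},
\]
where $\widetilde{T}_\lambda$ is compact by the Rellich--Kondrachov theorem. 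Using the decomposition $L^2(\Omega) = E_k \oplus H_k$ with $v = \alpha e_k + w$, I project this equation onto $E_k$ and $H_k$ to obtain a pair of equations analogous to~(\ref{ls2})--(\ref{lss2}), and I define an operator $\tilde\psi_\gamma(\alpha,\lambda,w)$ whose zeros correspond to solutions of~(\ref{bifurcationinf1}), exactly as in the trivial-solution case.

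Next I localize solutions via Lemma~\ref{linf1}: for $0 < \rho < \rho_0$ every nontrivial solution close to $(\lambda^1_k, 0)$ lies in the cone $\pmb{K^+_{\rho,\eta}}(\gamma)$, so it has the form $v = \alpha e_k + w$ with $\alpha > 0$, $|\lambda - \lambda^1_k| = o(1)$ and $w = o(1)$. I then choose a bounded open set $\mathscr{U}_\gamma = \{(\lambda,w) : |\lambda - \lambda^1_k(\gamma)| < \varepsilon_1, \ \|w\|_{1,2} < \varrho_1\}$ in which $\tilde\psi_\gamma(\alpha,\lambda,w) \ne 0$ on the boundary for $0 < |\alpha| < \alpha_1$, so that the Leray--Schauder degree $\deg(\tilde\psi_\gamma(\alpha,\lambda,\cdot), \mathscr{U}_\gamma, (0,0))$ is well defined. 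By the same homotopy $\Theta_\tau$ used in the proof of Theorem~\ref{bifurcationtri}, deforming the nonlinear compact perturbation to zero, the degree reduces to $\operatorname{ind}(\Theta_0, (\lambda_k, 0)) = \pm 1$, depending on the sign of $\alpha$; the change-of-index formula plus the nonexistence of solutions in the cone $\alpha < 0$ (given by Lemma~\ref{linf1}) yields a jump in the degree from $0$ to $\pm 1$ as $\alpha$ crosses zero.

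This jump in degree, combined with the standard Rabinowitz global bifurcation argument, produces a connected component $\mathcal{S}_{\lambda^1_k} \subset \mathcal{P}_\gamma$ containing $(\lambda^1_k,0)$ which either is unbounded in $\R \times L^2(\Omega)$ or meets another trivial bifurcation point $(\mu, 0)$ with $\mu \in \sigma(-\Delta - \gamma(\cdot)^-)$; the localization in $\pmb{K^+_{\rho,\eta}}(\gamma)$ rules out $\mu = \lambda^2_k$ (and self-intersection at $\lambda^1_k$), giving alternative~(a) or~(b). The treatment of $\mathcal{S}_{\lambda^2_k}$ is symmetric, replacing $\pmb{K^+_{\rho,\eta}}$ by $\pmb{K^-_{\rho,\eta}}$. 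Finally, Remark~\ref{binfty} converts each branch of~(\ref{bifurcationinf1}) bifurcating from $(\lambda^i_k, 0)$ into a branch of the original problem~(\ref{equa3}) bifurcating from $(\lambda^i_k, \infty)$ via the change of variable $v = u/\|u\|_{1,2}^{2-p/2}$.

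The only step that requires genuine care is verifying that the compactness and continuity arguments behind~(\ref{bifurcationinf2}) go through uniformly as $\gamma$ ranges in $[0,\bar\gamma]$, since the invertibility constant in Proposition~\ref{invertb} depends on the radius $r$; I expect this to follow exactly as in the proof of Lemma~\ref{l1} because the weight $\|v\|_{1,2}^{4-p}$ tends to $0$ near the bifurcation point, so the linearization of the principal part is precisely $-\Delta - \gamma(\cdot)^-$, and the continuous dependence of $\lambda^1_k(\gamma)$ (cited from~\cite[Lemma 2.4]{Ruf}) allows the homotopy over $\gamma \in [0,\bar\gamma]$ to be performed on a single bounded open set.
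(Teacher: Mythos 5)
Your proposal is correct and follows exactly the route the paper intends: the paper's own ``proof'' of Theorem~\ref{infe} is just the remark that one adapts the proof of Theorem~\ref{bifurcationtri} to equation~(\ref{bifurcationinf1}), using Proposition~\ref{invertb} and Lemma~\ref{linf1} in place of Lemma~\ref{invertiblity} and Lemma~\ref{l1}, with Remark~\ref{binfty} translating the branches into bifurcation from infinity for~(\ref{equa3}). Your write-up supplies precisely that adaptation (Lyapunov--Schmidt reduction, degree computation via the homotopy $\Theta_\tau$, degree jump, and the global alternative), so it matches the paper's argument in substance.
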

One can adapt the proof of Theorem \ref{bifurcationtri} to this theorem. Theorem \ref{infe} can be illustrate as in Figure \ref{fig}, where the norm $\|u\|_{1,p}$ in the $y$-axis has to be replaced by $\|u\|_{1,2}$.\\
\\
\textbf{Acknowledgments}\\
\\
The author wish to thank Bernhard Ruf for many fruitful discussions on this work.

\end{document}